\documentclass[a4paper]{scrartcl}
\usepackage{amsmath}
\usepackage{amssymb}
\usepackage[T1]{fontenc}
\usepackage{amsthm}
\usepackage{longtable}
\usepackage{stmaryrd}
\usepackage{color}
\usepackage{hyperref}
\usepackage[ansinew]{inputenc}
\usepackage{graphicx}
\usepackage{tikz}

\setlength{\parindent}{0pt}
\setlength{\evensidemargin}{1cm}
\setlength{\oddsidemargin}{1cm}

\begin{document}
\theoremstyle{remark}
\newtheorem{theorem}{\textbf{Theorem}}
\newtheorem{lemma}[theorem]{\textbf{Lemma}}
\newtheorem{corollary}[theorem]{\textbf{Corollary}}
\newtheorem{proposition}[theorem]{\textbf{Proposition}}
\newtheorem{claim}{\textbf{Claim}}
\newtheorem{algorithm}{\textbf{Algorithm}}
\newtheorem{definition}{\textbf{Definition}}
\newtheorem{case}{\textbf{Case}}
\newtheorem*{beweis}{\textbf{Proof:}}
\begin{center}
\large{\textsc{On the minimal monochromatic $K_4$-density}}
\end{center}
\begin{center}
\large{\textsc{Konrad Sperfeld}}\linebreak
Universität Rostock, Institut für Mathematik\linebreak
D-18057 Rostock, Germany \linebreak
\textit{Konrad.Sperfeld@uni-rostock.de}
\end{center}
\begin{abstract}
Abstract: We use Razborov's flag algebra method \cite{flagalgebra} to show a new asymptotic lower bound for the minimal density $m_4$ of monochromatic $K_4$'s in any $2$-coloring of the edges of the complete graph $K_n$ on $n$ vertices. The hitherto best known lower bound was obtained by Giraud \cite{giraud}, who proved that $m_4>\frac{1}{46}$, whereas the best known upper bound by Thomason \cite{thomason} states that $m_4<\frac{1}{33}$. We can show that $m_4>\frac{1}{35}$.
\end{abstract}
\section{Introduction}
Let $c_n$ be a $2$-coloring of the edges of the complete graph $K_n$ on $n$ vertices and $k_t(c_n)$ the number of monochromatic $K_t$'s in $c_n$. Now we denote by
\begin{equation}\label{eq:def}
m_t:=\lim\limits_{n \rightarrow \infty}{\underbrace{\frac{\min\left\{k_t(c_n)\right\}}{\binom{n}{t}}}_{=:r_{t}(n)}}
\end{equation}
the asymptotic value of the minimal density of monochromatic $K_t$'s in any $2$-coloring of a complete graph. Ramsey's theorem implies that $r_{t}(n)>0$ for large enough $n$ and since it is easily shown that $r_{t}(n)$ increases with $n$, it follows that the limit in equation \eqref{eq:def} exists. Thus, in every simple graph $G$ the minimal density of $K_t$'s in $G$ and its complement $\overline{G}$ is $m_t+o(1)$.\newline
As an easy consequence of a result of Goodman \cite{goodman} one gets $m_3=\frac{1}{4}$. But this is the only known value of $m_t$, apart from the trivial case $t=2$ with $m_2=1$. In 1964, Erdös \cite{erdoes} conjectured that $m_t=2^{1-\binom{t}{2}}$. This value can be achieved by a typical random graph. In 1989, Thomason \cite{thomason} disproved this conjecture by showing that $m_4<\frac{1}{33}$. But it seems that his construction is not optimal either. He uses a blowup operation to construct a graph with monochromatic $K_4$-density lower than $\frac{1}{33}$. But in the final paragraph of \cite{thomason2}, Thomason observes that it is possible to improve this construction by a tiny amount using a random perturbation. The only known lower bound for $m_4$ was given by Giraud \cite{giraud}. He proved that $m_4>\frac{1}{46}$. A shorter and English version of his proof can be found in \cite{wolf}. We will prove in section \ref{sec:main} that $m_4>\frac{1}{35}$.  For our proof we use Razborov's flag algebra method \cite{flagalgebra}. In section \ref{sec:flag} we will roughly explain everything we need from it to make this paper self contained.\newline
For a simple graph $G$ of order $n$, we write $V(G)$ for its set of vertices, $E(G)$ for its set of edges and $\overline{G}$ for its complement. Thus, $E(G)=E(K_n)\backslash E(\overline{G})$. Furthermore, let $\left[k\right]:=\left\{1,2,\ldots,k\right\}$. We write vectors underlined, e.g. $\underline{v}=\left(\underline{v}(1),\underline{v}(2),\underline{v}(3)\right)$ is a vector with three coordinates. A collection $V_1,\ldots,V_t$ of finite sets is a sunflower with center $C$ if $V_i\cap V_j=C$ for every two distinct $i,j\in\left[t\right]$.
%-----------------------------------------------------------------------------------------------------------------------------------------------------------------------------------------------
\section{Flag Algebras}\label{sec:flag}
With his theory of flag algebras, Razborov developed a very strong tool for solving some classes of problems in extremal graph theory. For our proof, we will just need a small part of his method, which can be thought of as an application of the Cauchy-Schwarz inequality in the theory of simple graphs. For a detailed study of flag algebras we refer the reader to Razborov's original paper \cite{flagalgebra}. In this section we will just define the most important ingredients for our calculation. Furthermore, we will give a short introduction into flag algebras, which is very similar to Razborov's presentation in \cite{razborov}.\newline
Let $\mathcal{G}$ be the family of all unlabeled simple graphs considered up to isomorphism. By $\mathcal{G}_\ell$ we denote the set of all $G\in\mathcal{G}$ with order $\ell$. A type $\sigma$ of order $k$ is a labeled graph of order $k$. Thus, each vertex of a type can be uniquely identified by its label. Usually, we use the elements of $\left[k\right]$ as labels. In our proof we will deal with six types $\sigma_0,\sigma_1,\ldots,\sigma_5$ of order $4$. These are defined in Figure \ref{fig:types}. 

\begin{figure}[h]
\begin{tikzpicture}
\filldraw[fill=black] (1,1) circle (0.07cm);
\draw (0.7,1) node {$1$};
\filldraw[fill=black] (1,2) circle (0.07cm);
\draw (0.7,2) node {$3$};
\filldraw[fill=black] (2,1) circle (0.07cm);
\draw (2.3,1) node {$2$};
\filldraw[fill=black] (2,2) circle (0.07cm);
\draw (2.3,2) node {$4$};
\draw (1.5,0.5) node {$\sigma_0$};
\end{tikzpicture}
$\; \;$
\begin{tikzpicture}
\draw (1,1)--(2,1);
\filldraw[fill=black] (1,1) circle (0.07cm);
\draw (0.7,1) node {$1$};
\filldraw[fill=black] (1,2) circle (0.07cm);
\draw (0.7,2) node {$3$};
\filldraw[fill=black] (2,1) circle (0.07cm);
\draw (2.3,1) node {$2$};
\filldraw[fill=black] (2,2) circle (0.07cm);
\draw (2.3,2) node {$4$};
\draw (1.5,0.5) node {$\sigma_1$};
\end{tikzpicture}
$\; \;$
\begin{tikzpicture}
\draw (1,1)--(2,1);
\draw (1,2)--(2,2);
\filldraw[fill=black] (1,1) circle (0.07cm);
\draw (0.7,1) node {$1$};
\filldraw[fill=black] (1,2) circle (0.07cm);
\draw (0.7,2) node {$3$};
\filldraw[fill=black] (2,1) circle (0.07cm);
\draw (2.3,1) node {$2$};
\filldraw[fill=black] (2,2) circle (0.07cm);
\draw (2.3,2) node {$4$};
\draw (1.5,0.5) node {$\sigma_2$};
\end{tikzpicture}
$\; \;$
\begin{tikzpicture}
\draw (1,2)--(1,1)--(2,1);
\filldraw[fill=black] (1,1) circle (0.07cm);
\draw (0.7,1) node {$1$};
\filldraw[fill=black] (1,2) circle (0.07cm);
\draw (0.7,2) node {$3$};
\filldraw[fill=black] (2,1) circle (0.07cm);
\draw (2.3,1) node {$2$};
\filldraw[fill=black] (2,2) circle (0.07cm);
\draw (2.3,2) node {$4$};
\draw (1.5,0.5) node {$\sigma_3$};
\end{tikzpicture}
$\; \;$
\begin{tikzpicture}
\draw (1,2)--(1,1)--(2,1);
\draw (1,1)--(2,2);
\filldraw[fill=black] (1,1) circle (0.07cm);
\draw (0.7,1) node {$1$};
\filldraw[fill=black] (1,2) circle (0.07cm);
\draw (0.7,2) node {$3$};
\filldraw[fill=black] (2,1) circle (0.07cm);
\draw (2.3,1) node {$2$};
\filldraw[fill=black] (2,2) circle (0.07cm);
\draw (2.3,2) node {$4$};
\draw (1.5,0.5) node {$\sigma_4$};
\end{tikzpicture}
$\; \;$
\begin{tikzpicture}
\draw (1,2)--(1,1)--(2,1)--(2,2);
\filldraw[fill=black] (1,1) circle (0.07cm);
\draw (0.7,1) node {$1$};
\filldraw[fill=black] (1,2) circle (0.07cm);
\draw (0.7,2) node {$3$};
\filldraw[fill=black] (2,1) circle (0.07cm);
\draw (2.3,1) node {$2$};
\filldraw[fill=black] (2,2) circle (0.07cm);
\draw (2.3,2) node {$4$};
\draw (1.5,0.5) node {$\sigma_5$};
\end{tikzpicture}
\caption{Definition of the types $\sigma_0,\sigma_1,\sigma_2,\sigma_3,\sigma_4,\sigma_5$.}\label{fig:types}
\end{figure}
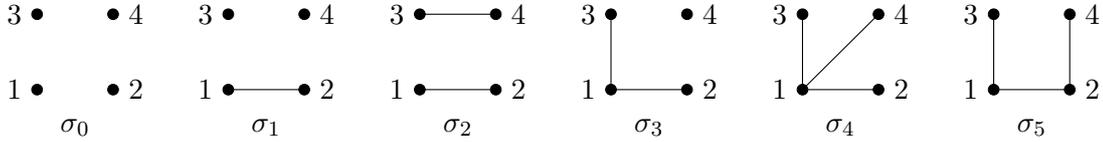
One denotes by $0$ the unique type of order $0$. Likewise one denotes by $1$ the unique type of order $1$.\newline
If $\sigma$ is a type of order $k$, we define a $\sigma$-flag as a pair $F=(G,\theta)$, where $G\in\mathcal{G}$ with $\left|V(G)\right|\geq k$ and $\theta:\left[k\right]\rightarrow V(G)$ is an injective function, such that the labelled vertices define an induced embedding of $\sigma$ into $G$. An isomorphism between two $\sigma$-flags $(G,\theta)$ and $(G',\theta')$ is an isomorphism $\phi$ between $G$ and $G'$ where $\phi(\theta(i))=\theta'(i)$. We write $\mathcal{F}^{\sigma}$ for the set of all $\sigma$-flags up to isomorphism. Again, we define $\mathcal{F}^\sigma_\ell\subseteq\mathcal{F}^\sigma$ as the set of all $\sigma$-flags of order $\ell$. For example, $\mathcal{F}^0_\ell=\mathcal{G}_\ell$.  If $\sigma$ is a type of order $k$, then $\mathcal{F}^\sigma_k$ consists only of $(\sigma,id)$. One denotes this element simply by $1_\sigma$. By definition we know that for every type $\sigma $ of order $k$ we have $\left|\mathcal{F}^{\sigma}_{k+1}\right|=2^k$. Thus following notation in \cite{wheel}, for $i=0,1,\ldots,5$ and $V\subseteq \left[4\right]$ we can denote the elements of $\mathcal{F}^{\sigma_i}_{5}$ by $F_V^{\sigma_i}=(G,\theta)$, such that $F_V^{\sigma_i}$ is the flag in which the only unlabelled vertex is connected to the set $\left\{\theta(i):\;i\in V\right\}$.\newline
Follow the notation of \cite{flagalgebra}, we write $\mathbf{math}$ $\mathbf{bold}$ $\mathbf{face}$ for random objects.
\begin{definition}(from \cite{flagalgebra})\newline
Fix a type $\sigma$ of order $k$, assume that integers $\ell,\ell_1,\ldots,\ell_t\geq k$ are such that
\begin{equation*}
\ell_1+\ldots+\ell_t-k(t-1)\leq \ell,
\end{equation*}
and $F=(M,\theta)\in \mathcal{F}_\ell^\sigma,\;F_1\in \mathcal{F}_{\ell_1}^\sigma,\ldots,\;\;F_t\in \mathcal{F}_{\ell_t}^\sigma$ are $\sigma$-flags. We define the (key) quantity $p(F_1,\ldots,F_t;F)\in\left[0,1\right]$ as follows. Choose in $V(M)$ uniformely at random a sunflower $(\mathbf{V_1},\ldots,\mathbf{V_t})$ with center $im(\theta)$ and $\forall i\;\left|\mathbf{V_i}\right|=\ell_i$. We let $p(F_1,\ldots,F_t;F)$ denote the probability of the event "$\forall i \in \left[t\right]\;F|_{\mathbf{V_i}}$ is isomorphic to $F_i$." When $t=1$, we use the notation $p(F_1,F)$ instead of $p(F_1;F)$.
\end{definition}
In the following we can identify a $\sigma$-flag $F$ by the probability $p(F,\hat{F})$, where $\hat{F}$ is an arbitrary large enough $\sigma$-flag. Thus, for example if we write
\begin{center}
\begin{tikzpicture}
\filldraw[fill=black] (1,0) circle (0.07cm);
\filldraw[fill=black] (1,1) circle (0.07cm);
\draw (1,0)--(1,1);
\draw (0.8,0) node {\small{$1$}};
\draw (1.2,-0.1) node {$\;$,};
\end{tikzpicture} 
\end{center}
we can think of it to be the normalized number of neighbours of a fixed vertex (called "$1$") in an arbitrary large enough graph. Or if we write
\begin{center}
\begin{tikzpicture}
\filldraw[fill=black] (1,1) circle (0.07cm);
\filldraw[fill=black] (0,1) circle (0.07cm);
\filldraw[fill=black] (0.5,0) circle (0.07cm);
\draw (0,1)--(0.5,0)--(1,1)--(0,1);
\end{tikzpicture},
\end{center}
we can think of it to be the triangle-density in an arbitrary large enough graph. Remark that these examples are not formal definitions. It should just allow an easier understanding of the following definitions. \newline 
Now, we build formal finite linear combinations of $\sigma$-flags. We denote the space which contains these linear combinations by $\mathbb{R}\mathcal{F}^\sigma$. Roughly speaking, if we think of the $F$-density in a graph of sufficently large order for a flag $F\in\mathcal{F}^\sigma_{\ell}$, it seems sensible to call the subspace $\mathcal{K}^\sigma$ which is generated by all elements of the form
\begin{equation*}\label{eq:dcargu}
F_1-\sum\limits_{\tilde{F}\in\mathcal{F}_{\tilde{\ell}}^\sigma}p(F_1,\tilde{F})\tilde{F},
\end{equation*}
where $F_1\in\mathcal{F}_{\ell_1}^\sigma$ with $\ell_1\leq\tilde{\ell}$, the subspace of "identically zero flag parameters". We want to illustrate this by an example. It can be seen by an easy double-counting argument that
\begin{equation}\label{eq:dcargu}
F_1=\sum\limits_{\tilde{F}\in\mathcal{F}_{\tilde{\ell}}^\sigma}p(F_1,\tilde{F})\tilde{F}.
\end{equation}
For example, the edge-density in an arbitrary large enough graph can be expressed as a linear combination of induced subgraph-densities of graphs of order $3$ in this graph. Thus,
\begin{equation*}
\parbox[c]{0.3cm}{
\begin{tikzpicture}
\filldraw[fill=black] (1,0) circle (0.07cm);
\filldraw[fill=black] (1,1) circle (0.07cm);

\draw (1,0)--(1,1);
\end{tikzpicture}}\;=\;
\parbox[c]{1cm}{
\begin{tikzpicture}
\filldraw[fill=black] (1,1) circle (0.07cm);
\filldraw[fill=black] (0,1) circle (0.07cm);
\filldraw[fill=black] (0.5,0) circle (0.07cm);
\draw (0,1)--(0.5,0)--(1,1)--(0,1);
\end{tikzpicture}}
\;+\;\frac{2}{3}
\parbox[c]{1cm}{
\begin{tikzpicture}
\filldraw[fill=black] (1,1) circle (0.07cm);
\filldraw[fill=black] (0,1) circle (0.07cm);
\filldraw[fill=black] (0.5,0) circle (0.07cm);
\draw (0,1)--(0.5,0)--(1,1);
\end{tikzpicture}}
\;+\;\frac{1}{3}
\parbox[c]{1cm}{
\begin{tikzpicture}
\filldraw[fill=black] (1,1) circle (0.07cm);
\filldraw[fill=black] (0,1) circle (0.07cm);
\filldraw[fill=black] (0.5,0) circle (0.07cm);
\draw (1,1)--(0,1);
\end{tikzpicture}}\;\;.
\end{equation*}
\newline
Now it is natural to define $\mathcal{A}^\sigma:=\mathbb{R}\mathcal{F}^\sigma / \mathcal{K}^\sigma $ as the flag algebra of the type $\sigma$. This means, we factor $\mathbb{R}\mathcal{F}^\sigma$ by the subspace $\mathcal{K}^\sigma$. In Lemma 2.4 of \cite{flagalgebra}, Razborov shows that $\mathcal{A}^\sigma$ is naturally endowed with the structure of a commutative associative algebra. He defines a bilinear mapping for flags in the following way. Let $\sigma$ be a type of order $k$. For two $\sigma$-flags $F_1\in\mathcal{F}_{\ell_1}^\sigma$, $F_2\in\mathcal{F}_{\ell_2}^\sigma$ and $\ell\geq \ell_1+\ell_2-k$ we define
\begin{equation*}
F_1\cdot F_2:=\sum\limits_{F\in\mathcal{F}_\ell^\sigma}p(F_1,F_2;F)F.
\end{equation*}
Remark that this definition is not well defined on $\mathbb{R}\mathcal{F}^\sigma$, but on $\mathcal{A}^\sigma$ it is. The disadvantage of this definition is that this product is just asymptotically the same as the product one would expect, if we interpret the $\sigma$-flags in the above way, because
\begin{equation*}
p(F_1,F_2;F)=p(F_1,F)p(F_2,F)+o(1).
\end{equation*}
That is why flagalgebraic proofs using this product operation are only asymptotically true.\newline 
Additionally, we want to remark in crude words that the function $F\rightarrow p(F,\hat{F})$ for very large $\hat{F}$ asymptotically corresponds to an algebra homomorphism $\phi \in \text{Hom}(\mathcal{A}^\sigma,\mathbb{R})$. 
Razborov now considers the set
\begin{equation*}
\text{Hom}^+(\mathcal{A}^\sigma,\mathbb{R}):=\left\{\phi \in \text{Hom}(\mathcal{A}^\sigma,\mathbb{R}) | \forall F \in \mathcal{F}^\sigma \; \phi (F)\geq 0\right\}
\end{equation*}
and shows in Corollary 3.4 of \cite{flagalgebra} that $\text{Hom}^+(\mathcal{A}^\sigma,\mathbb{R})$ captures all asymptotically true relations in extremal combinatorics.\newline
Thus, we have seen the basic idea of flag algebras. It is useful to define for $f,g\in \mathcal{A}^0$ that $f\geq g$ if $\forall \phi \in \text{Hom}^+(\mathcal{A}^\sigma,\mathbb{R}) \left(\phi(f)\geq\phi(g)\right)$. This is a partial preorder on $\mathcal{A}^0$. Now we want to turn our attention to an application of the Cauchy-Schwarz inequality in flag algebras.  \newline
We define the averaging operator $\left\llbracket \cdot \right\rrbracket_\sigma\;:\;\mathcal{A}^\sigma\rightarrow \mathcal{A}^0$ as follows. For a type $\sigma$ of order $k$ and $F=(G,\theta)\in\mathcal{F}^\sigma$, let $q_\sigma(F)$ be the probability that a uniformely at random chosen injective mapping $\mbox{\boldmath$\theta$}:\;\left[k\right]\rightarrow V(G)$ defines an induced embedding of $\sigma$ in $G$ and the resulting $\sigma$-flag $(G,\mbox{\boldmath$\theta$})$ is isomorphic to $F$. Now, we define 
\begin{equation*}
\left\llbracket F\right\rrbracket_\sigma:=q(F)\cdot G
\end{equation*}
partially on $\mathcal{F}^\sigma$. 
In section 2.2 in \cite{flagalgebra}, Razborov proves that this operator can be extended linearly to $\mathcal{A}^\sigma$ and he explains why it corresponds to averaging.
\begin{theorem}\label{th:cs}\textit{Cauchy-Schwarz inequality}(from \cite{flagalgebra}, Theorem 3.14)\newline
Let $f,g\in\mathcal{F}^\sigma$, then
\begin{equation*}
\left\llbracket f^2\right\rrbracket_\sigma \cdot\left\llbracket g^2\right\rrbracket_\sigma \geq \left\llbracket fg\right\rrbracket_\sigma^2.
\end{equation*}
In particular $(g=1_\sigma)$,
\begin{equation*}
\left\llbracket f^2\right\rrbracket_\sigma \cdot \sigma \geq \left\llbracket f\right\rrbracket_\sigma^2,
\end{equation*}
which in turn implies
\begin{equation*}
\left\llbracket f^2\right\rrbracket_\sigma\geq 0.
\end{equation*}
\end{theorem}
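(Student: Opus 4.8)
The plan is to reduce the three algebraic inequalities to a single numerical Cauchy--Schwarz inequality for real random variables, using the semantic meaning of $\mathrm{Hom}^+$ together with the probabilistic interpretation of the averaging operator. By the definition of the preorder $\geq$ on $\mathcal{A}^0$, the first assertion $\llbracket f^2\rrbracket_\sigma\cdot\llbracket g^2\rrbracket_\sigma\geq\llbracket fg\rrbracket_\sigma^2$ is equivalent to the numerical statement
\[
\phi\bigl(\llbracket f^2\rrbracket_\sigma\bigr)\,\phi\bigl(\llbracket g^2\rrbracket_\sigma\bigr)\ \geq\ \phi\bigl(\llbracket fg\rrbracket_\sigma\bigr)^2
\]
for every fixed $\phi\in\mathrm{Hom}^+(\mathcal{A}^0,\mathbb{R})$, since $\phi$ is multiplicative and hence splits the products on both sides. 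So I fix one such $\phi$ and argue entirely with real numbers.

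The key step is Razborov's disintegration of $\phi$ through the type $\sigma$ (the machinery behind Cor.\ 3.4 of \cite{flagalgebra}): attached to $\phi$ there is a random homomorphism $\mbox{\boldmath$\phi$}^\sigma$, valued in $\mathrm{Hom}^+(\mathcal{A}^\sigma,\mathbb{R})$, for which
\[
\phi\bigl(\llbracket X\rrbracket_\sigma\bigr)\ =\ \phi(\sigma)\cdot\mathbb{E}\bigl[\mbox{\boldmath$\phi$}^\sigma(X)\bigr]\qquad(X\in\mathcal{A}^\sigma),
\]
the intuition being that one picks a uniformly random labelling of $\sigma$ inside a large graph realizing $\phi$ and reads off the rooted flag densities. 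Since each realization of $\mbox{\boldmath$\phi$}^\sigma$ is an algebra homomorphism, it respects the product of $\mathcal{A}^\sigma$, so writing $X:=\mbox{\boldmath$\phi$}^\sigma(f)$ and $Y:=\mbox{\boldmath$\phi$}^\sigma(g)$ gives $\mbox{\boldmath$\phi$}^\sigma(f^2)=X^2$, $\mbox{\boldmath$\phi$}^\sigma(g^2)=Y^2$ and $\mbox{\boldmath$\phi$}^\sigma(fg)=XY$, whence $\phi(\llbracket f^2\rrbracket_\sigma)=\phi(\sigma)\,\mathbb{E}[X^2]$, $\phi(\llbracket g^2\rrbracket_\sigma)=\phi(\sigma)\,\mathbb{E}[Y^2]$ and $\phi(\llbracket fg\rrbracket_\sigma)=\phi(\sigma)\,\mathbb{E}[XY]$.

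Now the displayed inequality is exactly the classical $\mathbb{E}[X^2]\,\mathbb{E}[Y^2]\geq\mathbb{E}[XY]^2$, scaled by the common nonnegative factor $\phi(\sigma)^2$; I would prove the numerical bound in the usual way from $\mathbb{E}[(tX-Y)^2]\geq0$. As $\phi$ was arbitrary, the first inequality follows (the degenerate case $\phi(\sigma)=0$ makes every $\phi(\llbracket\,\cdot\,\rrbracket_\sigma)$ vanish, so it holds trivially). The special case is the choice $g=1_\sigma$: then $fg=f$ while $g^2=1_\sigma$ averages to $\llbracket1_\sigma\rrbracket_\sigma=\sigma\in\mathcal{A}^0$, giving $\llbracket f^2\rrbracket_\sigma\cdot\sigma\geq\llbracket f\rrbracket_\sigma^2$; and $\llbracket f^2\rrbracket_\sigma\geq0$ is read off directly, since $\phi(\llbracket f^2\rrbracket_\sigma)=\phi(\sigma)\,\mathbb{E}[X^2]\geq0$.

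I expect the only genuine obstacle to be the middle step: justifying the disintegration $\phi\circ\llbracket\,\cdot\,\rrbracket_\sigma=\phi(\sigma)\,\mathbb{E}[\mbox{\boldmath$\phi$}^\sigma(\,\cdot\,)]$ and, with it, that the asymptotic product of $\mathcal{A}^\sigma$ is genuinely multiplicative under each $\mbox{\boldmath$\phi$}^\sigma$. This is precisely where the error term in $p(F_1,F_2;F)=p(F_1,F)p(F_2,F)+o(1)$ must be controlled, and it is the substance of Razborov's averaging lemmas; everything around it is elementary. If a fully self-contained argument is preferred, one can bypass the abstract disintegration by working inside a single large graph $G$: let $\mbox{\boldmath$\theta$}$ be a uniform random injection of $[k]$ into $V(G)$, set $Z_f:=\mathbf{1}[\mbox{\boldmath$\theta$}\text{ embeds }\sigma]\cdot f(G,\mbox{\boldmath$\theta$})$ and likewise $Z_g$, observe $\phi_G(\llbracket f^2\rrbracket_\sigma)=\mathbb{E}[Z_f^2]+o(1)$ and $\phi_G(\llbracket fg\rrbracket_\sigma)=\mathbb{E}[Z_fZ_g]+o(1)$ by asymptotic multiplicativity, and apply $\mathbb{E}$-Cauchy--Schwarz to $Z_f,Z_g$; letting $|V(G)|\to\infty$ removes the $o(1)$ and, by Cor.\ 3.4 of \cite{flagalgebra}, covers all $\phi$.
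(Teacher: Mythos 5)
The paper itself contains no proof of this theorem: it is imported verbatim from Razborov (Theorem 3.14 of \cite{flagalgebra}), so there is no internal argument of the paper to compare yours against. What you wrote is, in substance, exactly Razborov's own proof of that theorem: evaluate at an arbitrary $\phi\in\mathrm{Hom}^+(\mathcal{A}^0,\mathbb{R})$, use multiplicativity of $\phi$ to split the products, disintegrate $\phi\circ\llbracket\cdot\rrbracket_\sigma$ through the ensemble of random homomorphisms attached to $\phi$ (Razborov's Theorem 3.5), and reduce to the classical bound $\mathbb{E}[X^2]\,\mathbb{E}[Y^2]\geq\mathbb{E}[XY]^2$; your handling of the degenerate case $\phi(\sigma)=0$ is also correct, and the finite-graph alternative you sketch at the end (Cauchy--Schwarz for a random injection $\mbox{\boldmath$\theta$}$ inside one large graph, with the $o(1)$ from $p(F_1,F_2;F)=p(F_1,F)p(F_2,F)+o(1)$ killed in the limit) is the standard elementary variant used in later applications of the method. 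One small slip worth fixing: with this paper's normalization $\llbracket F\rrbracket_\sigma=q(F)\cdot G$, the flag $1_\sigma$ averages to $\llbracket 1_\sigma\rrbracket_\sigma=\frac{\left|Aut(\sigma)\right|}{k!}\,\sigma$, not to $\sigma$ itself; this is harmless for the ``in particular'' clause, because the factor is at most $1$ and $\llbracket f^2\rrbracket_\sigma\cdot\sigma\geq 0$ (which your disintegration identity gives directly), but the constant should be tracked rather than silently dropped.
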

As an example, we want to show that $m_3=\frac{1}{4}$. \newline
At first, we consider the normalized number of pairs of neighbours of one fixed vertex. There are two options, either there is an edge or there is no edge between such a pair. Thus, we can express this normalized number by the following linear combination of $\sigma$-flags, which is asymptotically the same as the square of the normalized number of neighbours of the fixed vertex.
\begin{equation*}
\parbox[c]{1cm}{
\begin{tikzpicture}
\filldraw[fill=black] (1,1) circle (0.07cm);
\filldraw[fill=black] (0,1) circle (0.07cm);
\filldraw[fill=black] (0.5,0) circle (0.07cm);
\draw (0,1)--(0.5,0)--(1,1);
\draw (0,1)--(1,1);
%\lefteqn{\phantom{\draw (0.5,1) node {?};}}
\draw (0.3,0) node {\small{$1$}};
\end{tikzpicture}}\;+\;
\parbox[c]{1cm}{
\begin{tikzpicture}
\filldraw[fill=black] (1,1) circle (0.07cm);
\filldraw[fill=black] (0,1) circle (0.07cm);
\filldraw[fill=black] (0.5,0) circle (0.07cm);
\draw (0,1)--(0.5,0)--(1,1);
%\lefteqn{\phantom{\draw (0.5,1) node {?};}}
\draw (0.3,0) node {\small{$1$}};
\end{tikzpicture}}
\;=\; \left(
\parbox[c]{0.7cm}{
\begin{tikzpicture}
\filldraw[fill=black] (1,0) circle (0.07cm);
\filldraw[fill=black] (1,1) circle (0.07cm);
\draw (1,0)--(1,1);
\draw (0.8,0) node {\small{$1$}};
\end{tikzpicture}}
\right)^2
\end{equation*} 
Now we want to average this normalized number for all choices of fixed vertices. Then we get
\begin{equation}\label{eq:examplefi}
\left\llbracket
\left(
\parbox[c]{0.7cm}{
\begin{tikzpicture}
\filldraw[fill=black] (1,0) circle (0.07cm);
\filldraw[fill=black] (1,1) circle (0.07cm);
\draw (1,0)--(1,1);
\draw (0.8,0) node {\small{$1$}};
\end{tikzpicture}}
\right)^2\right\rrbracket_1 \;
 =\;\left\llbracket
 \parbox[c]{1cm}{
\begin{tikzpicture}
\filldraw[fill=black] (1,1) circle (0.07cm);
\filldraw[fill=black] (0,1) circle (0.07cm);
\filldraw[fill=black] (0.5,0) circle (0.07cm);
\draw (0,1)--(0.5,0)--(1,1);
\draw (0,1)--(1,1);
%\lefteqn{\phantom{\draw (0.5,1) node {?};}}
\draw (0.3,0) node {\small{$1$}};
\end{tikzpicture}}\;+\;
\parbox[c]{1.2cm}{
\begin{tikzpicture}
\filldraw[fill=black] (1,1) circle (0.07cm);
\filldraw[fill=black] (0,1) circle (0.07cm);
\filldraw[fill=black] (0.5,0) circle (0.07cm);
\draw (0,1)--(0.5,0)--(1,1);
%\lefteqn{\phantom{\draw (0.5,1) node {?};}}
\draw (0.3,0) node {\small{$1$}};
\end{tikzpicture}}\right\rrbracket_1
 \;=\;
\parbox[c]{1cm}{
\begin{tikzpicture}
\filldraw[fill=black] (1,1) circle (0.07cm);
\filldraw[fill=black] (0,1) circle (0.07cm);
\filldraw[fill=black] (0.5,0) circle (0.07cm);
\draw (0,1)--(0.5,0)--(1,1);
\draw (0,1)--(1,1);
%\lefteqn{\phantom{\draw (0.5,1) node {?};}}

\end{tikzpicture}}\;+\frac{1}{3}\;
\parbox[c]{1cm}{
\begin{tikzpicture}
\filldraw[fill=black] (1,1) circle (0.07cm);
\filldraw[fill=black] (0,1) circle (0.07cm);
\filldraw[fill=black] (0.5,0) circle (0.07cm);
\draw (0,1)--(0.5,0)--(1,1);
%\lefteqn{\phantom{\draw (0.5,1) node {?};}}
\end{tikzpicture}}\;\;.
\end{equation}
Analogously, one can derive 
\begin{equation}\label{eq:examplese}
\left\llbracket
\left(
\parbox[c]{0.7cm}{
\begin{tikzpicture}
\filldraw[fill=black] (1,0) circle (0.07cm);
\filldraw[fill=black] (1,1) circle (0.07cm);
\draw (0.8,0) node {\small{$1$}};
\end{tikzpicture}}
\right)^2\right\rrbracket_1 \;
 =\;\left\llbracket
 \parbox[c]{1cm}{
\begin{tikzpicture}
\filldraw[fill=black] (1,1) circle (0.07cm);
\filldraw[fill=black] (0,1) circle (0.07cm);
\filldraw[fill=black] (0.5,0) circle (0.07cm);
%\draw (0,1)--(0.5,0)--(1,1);
%\draw (0,1)--(1,1);
%\lefteqn{\phantom{\draw (0.5,1) node {?};}}
\draw (0.3,0) node {\small{$1$}};
\end{tikzpicture}}\;+\;
\parbox[c]{1.2cm}{
\begin{tikzpicture}
\filldraw[fill=black] (1,1) circle (0.07cm);
\filldraw[fill=black] (0,1) circle (0.07cm);
\filldraw[fill=black] (0.5,0) circle (0.07cm);
%\draw (0,1)--(0.5,0)--(1,1);
\draw (0,1)--(1,1);
%\lefteqn{\phantom{\draw (0.5,1) node {?};}}
\draw (0.3,0) node {\small{$1$}};
\end{tikzpicture}}\right\rrbracket_1
 \;=\;
\parbox[c]{1cm}{
\begin{tikzpicture}
\filldraw[fill=black] (1,1) circle (0.07cm);
\filldraw[fill=black] (0,1) circle (0.07cm);
\filldraw[fill=black] (0.5,0) circle (0.07cm);
%\draw (0,1)--(0.5,0)--(1,1);
%\draw (0,1)--(1,1);
%\lefteqn{\phantom{\draw (0.5,1) node {?};}}

\end{tikzpicture}}\;+\frac{1}{3}\;
\parbox[c]{1cm}{
\begin{tikzpicture}
\filldraw[fill=black] (1,1) circle (0.07cm);
\filldraw[fill=black] (0,1) circle (0.07cm);
\filldraw[fill=black] (0.5,0) circle (0.07cm);
%\draw (0,1)--(0.5,0)--(1,1);
\draw (0,1)--(1,1);
%\lefteqn{\phantom{\draw (0.5,1) node {?};}}
\end{tikzpicture}}\;\;.
\end{equation}
Using a similar double-counting idea as in equation \eqref{eq:dcargu}, one can see that
\begin{equation}\label{eq:3ordersum}
1=
\parbox[c]{1cm}{
\begin{tikzpicture}
\filldraw[fill=black] (1,1) circle (0.07cm);
\filldraw[fill=black] (0,1) circle (0.07cm);
\filldraw[fill=black] (0.5,0) circle (0.07cm);
\draw (0,1)--(0.5,0)--(1,1)--(0,1);
\end{tikzpicture}}
\;+\;
\parbox[c]{1cm}{
\begin{tikzpicture}
\filldraw[fill=black] (1,1) circle (0.07cm);
\filldraw[fill=black] (0,1) circle (0.07cm);
\filldraw[fill=black] (0.5,0) circle (0.07cm);
\draw (0,1)--(0.5,0)--(1,1);
\end{tikzpicture}}
\;+\;
\parbox[c]{1cm}{
\begin{tikzpicture}
\filldraw[fill=black] (1,1) circle (0.07cm);
\filldraw[fill=black] (0,1) circle (0.07cm);
\filldraw[fill=black] (0.5,0) circle (0.07cm);
\end{tikzpicture}}
\;+\;
\parbox[c]{1cm}{
\begin{tikzpicture}
\filldraw[fill=black] (1,1) circle (0.07cm);
\filldraw[fill=black] (0,1) circle (0.07cm);
\filldraw[fill=black] (0.5,0) circle (0.07cm);
\draw (1,1)--(0,1);
\end{tikzpicture}}\;\;.
\end{equation}
Now the equations \eqref{eq:examplefi}, \eqref{eq:examplese} and \eqref{eq:3ordersum} and an application of Theorem \ref{th:cs} tell us that
\begin{eqnarray*}
\parbox[c]{1cm}{
\begin{tikzpicture}
\filldraw[fill=black] (1,1) circle (0.07cm);
\filldraw[fill=black] (0,1) circle (0.07cm);
\filldraw[fill=black] (0.5,0) circle (0.07cm);
\draw (0,1)--(0.5,0)--(1,1)--(0,1);
\end{tikzpicture}}
\;+\;
\parbox[c]{1cm}{
\begin{tikzpicture}
\filldraw[fill=black] (1,1) circle (0.07cm);
\filldraw[fill=black] (0,1) circle (0.07cm);
\filldraw[fill=black] (0.5,0) circle (0.07cm);
\end{tikzpicture}}
&=&
\frac{3}{2}\left(\left\llbracket
\left(
\parbox[c]{0.7cm}{
\begin{tikzpicture}
\filldraw[fill=black] (1,0) circle (0.07cm);
\filldraw[fill=black] (1,1) circle (0.07cm);
\draw (1,0)--(1,1);
\draw (0.8,0) node {\small{$1$}};
\end{tikzpicture}}
\right)^2\right\rrbracket_1+\left\llbracket
\left(
\parbox[c]{0.7cm}{
\begin{tikzpicture}
\filldraw[fill=black] (1,0) circle (0.07cm);
\filldraw[fill=black] (1,1) circle (0.07cm);
\draw (0.8,0) node {\small{$1$}};
\end{tikzpicture}}
\right)^2\right\rrbracket_1\right)-\frac{1}{2} \\
& \geq & \frac{3}{2}\left(
\left(\left\llbracket\
\parbox[c]{0.6cm}{
\begin{tikzpicture}
\filldraw[fill=black] (1,0) circle (0.07cm);
\filldraw[fill=black] (1,1) circle (0.07cm);
\draw (1,0)--(1,1);
\draw (0.8,0) node {\small{$1$}};
\end{tikzpicture}}
\right\rrbracket_1 \right)^2 
+
\left(\left\llbracket\
\parbox[c]{0.6cm}{
\begin{tikzpicture}
\filldraw[fill=black] (1,0) circle (0.07cm);
\filldraw[fill=black] (1,1) circle (0.07cm);
\draw (0.8,0) node {\small{$1$}};
\end{tikzpicture}}
\right\rrbracket_1 \right)^2 
\right)-\frac{1}{2} \\
& = & \frac{3}{2}\left(
\left(\parbox[c]{0.2cm}{
\begin{tikzpicture}
\filldraw[fill=black] (1,0) circle (0.07cm);
\filldraw[fill=black] (1,1) circle (0.07cm);
\draw (1,0)--(1,1);
\end{tikzpicture}} \right)^2
+
\left(1-\parbox[c]{0.2cm}{
\begin{tikzpicture}
\filldraw[fill=black] (1,0) circle (0.07cm);
\filldraw[fill=black] (1,1) circle (0.07cm);
\draw (1,0)--(1,1);
\end{tikzpicture}} \right)^2
\right) \\
& = & 3
\left(\parbox[c]{0.2cm}{
\begin{tikzpicture}
\filldraw[fill=black] (1,0) circle (0.07cm);
\filldraw[fill=black] (1,1) circle (0.07cm);
\draw (1,0)--(1,1);
\end{tikzpicture}} \right)^2
-3\,
\parbox[c]{0.2cm}{
\begin{tikzpicture}
\filldraw[fill=black] (1,0) circle (0.07cm);
\filldraw[fill=black] (1,1) circle (0.07cm);
\draw (1,0)--(1,1);
\end{tikzpicture}} 
+1.
\end{eqnarray*}
The righthandside depends only on the edge-density \parbox[c]{0.2cm}{\begin{tikzpicture}[scale=0.35]
\filldraw[fill=black] (1,0) circle (0.07cm);
\filldraw[fill=black] (1,1) circle (0.07cm);
\draw (1,0)--(1,1);
\end{tikzpicture}}, which can be minimized by taking $\parbox[c]{0.2cm}{\begin{tikzpicture}[scale=0.35]
\filldraw[fill=black] (1,0) circle (0.07cm);
\filldraw[fill=black] (1,1) circle (0.07cm);
\draw (1,0)--(1,1);
\end{tikzpicture}}=\frac{1}{2}$. Thus, we have
\begin{equation*}
\parbox[c]{1cm}{
\begin{tikzpicture}
\filldraw[fill=black] (1,1) circle (0.07cm);
\filldraw[fill=black] (0,1) circle (0.07cm);
\filldraw[fill=black] (0.5,0) circle (0.07cm);
\draw (0,1)--(0.5,0)--(1,1)--(0,1);
\end{tikzpicture}}
\;+\;
\parbox[c]{1cm}{
\begin{tikzpicture}
\filldraw[fill=black] (1,1) circle (0.07cm);
\filldraw[fill=black] (0,1) circle (0.07cm);
\filldraw[fill=black] (0.5,0) circle (0.07cm);
\end{tikzpicture}}\;\;\geq \frac{1}{4}\;\Rightarrow\;m_3\geq \frac{1}{4}.
\end{equation*}
On the other hand it is easy to see that $m_3\leq\frac{1}{4}$, since a typical random graph achieves $\frac{1}{4}$.\newline
We need some further notation for our calculations. For a $\sigma$-flag $F=(G,\theta)$ we define its complement $\overline{F}\in\mathcal{F}^{\overline{\sigma}}$ as $\left(\overline{G},\theta\right)$. If $\underline{v}$ is a vector of $n$ $\sigma$-flags, then $\overline{\underline{v}}:=\left(\overline{\underline{v}(1)},\ldots,\overline{\underline{v}(n)}\right)$. Let $\sigma$ be a type, then we denote by $Aut(\sigma)$ its group of automorphisms (for example $Aut(\sigma_4)\cong S_3$ and $Aut(\sigma_1)\cong\mathbb{Z}_2\times\mathbb{Z}_2$). Furthermore, we define $f_V^\sigma \in \mathcal{A}^\sigma$ by
\begin{equation*}
f_V^\sigma:=\sum\limits_{\eta\in Aut(\sigma)}F_{\eta(V)}^\sigma.
\end{equation*}
Notice that these elements are $Aut(\sigma)$-invariant. It is an easy consequence of Theorem~\ref{th:cs} that $\left\llbracket\underline{v}^TA\underline{v}\right\rrbracket_\sigma\geq 0$, if $\underline{v}$ is a vector of $n$ $\sigma$-flags and $A\in\mathbb{R}^{n\times n}$ is symmetric positive semidefinite.\newline
Each element of $\mathcal{A}^\sigma$ can be written as a direct sum of its $Aut(\sigma)$-invariant part and its $Aut(\sigma)$-antiinvariant part ($f\in\mathcal{A}^\sigma$ is called antiinvariant, if $\sum\limits_{\eta \in Aut(\sigma)}\eta(f)=0$). It is easy to see that if $f\in \mathcal{A}^\sigma$ is invariant and $g\in \mathcal{A}^\sigma$ is antiinvariant, then $\left[\left[fg\right]\right]_\sigma=0$. Hence, if we work with positive semidefinite matrices $A$ like above, we can split them for each type in an invariant part (denoted by "$^+$") and an antiinvariant part (denoted by "$^-$"). For a more detailed explanation of this direct sum decomposition see e.g. section 4 of \cite{razborov}.
\section{Main Result}\label{sec:main}
A lot of calculations in our proof deal with linear combinations of elements of $\mathcal{G}_6$. We have written a computer program to determine the elements of $\mathcal{G}_6$. It turns out that $\left|\mathcal{G}_6\right|=156$. In Appendix \ref{app:m6} we give a list of the elements $G_i$ for $i\in\left\{0,\ldots,155\right\}$ of $\mathcal{G}_6$.
\begin{theorem}\label{th:main}
\begin{equation*}
m_4\geq\frac{1}{34.7858}
\end{equation*}
\end{theorem}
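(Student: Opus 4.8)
The plan is to realize $m_4$ as a linear optimization over the flag algebra $\mathcal{A}^0$ and to certify the lower bound by a family of averaged positive-semidefinite quadratic forms together with nonnegative graph densities. First I would encode the quantity to be bounded. Because a $2$-coloring is a graph $G$ together with its complement $\overline{G}$, a monochromatic $K_4$ is either a $K_4$ or an empty graph on four vertices, so the asymptotic monochromatic $K_4$-density is represented in $\mathcal{A}^0$ by $d:=K_4+\overline{K_4}$, where $K_4,\overline{K_4}\in\mathcal{G}_4$ are identified with their densities; by Razborov's Corollary~3.4 the minimum of $\phi(d)$ over $\phi\in\text{Hom}^+(\mathcal{A}^0,\mathbb{R})$ equals $m_4$. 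Using the double-counting identity \eqref{eq:dcargu} I would lift $d$ to level six, writing $d=\sum_{G\in\mathcal{G}_6}\lambda_G\,G$ with $\lambda_G=p(K_4,G)+p(\overline{K_4},G)$ the exactly computable density of monochromatic $K_4$'s among the four-subsets of $G$. This reduces the whole problem to the $156$ coefficients indexed by $\mathcal{G}_6$.

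The core of the argument is to produce, for a constant $c$ as close as possible to $\tfrac{1}{34.7858}$, a certificate in $\mathcal{A}^0$ of the form
\begin{equation*}
d-c\cdot 1\;=\;\sum_{i=0}^{5}\left\llbracket \underline{v}_i^{\,T}A_i\,\underline{v}_i\right\rrbracket_{\sigma_i}\;+\;\sum_{G\in\mathcal{G}_6}c_G\,G,
\end{equation*}
where $1=\sum_{G\in\mathcal{G}_6}G$, each $\underline{v}_i$ is the vector of the $16$ flags of $\mathcal{F}^{\sigma_i}_5$ (so that the products live at level six, since $2\cdot 5-4=6$), each $A_i\in\mathbb{R}^{16\times 16}$ is symmetric positive semidefinite, and every $c_G\geq 0$. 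Granting this, the consequence of Theorem~\ref{th:cs} noted above gives $\left\llbracket \underline{v}_i^{\,T}A_i\,\underline{v}_i\right\rrbracket_{\sigma_i}\geq 0$ for each $i$, so applying any $\phi\in\text{Hom}^+(\mathcal{A}^0,\mathbb{R})$ yields $\phi(d)\geq c$ and hence $m_4\geq c$. To make the $A_i$ tractable I would exploit the automorphism groups of the six types $\sigma_0,\ldots,\sigma_5$: expressing everything through the $Aut(\sigma_i)$-invariant elements $f_V^{\sigma_i}$ and splitting $\mathcal{A}^{\sigma_i}$ into its invariant and antiinvariant parts block-diagonalizes each $A_i$ into a much smaller $A_i^{+}\oplus A_i^{-}$, cutting the number of free entries substantially.

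Finding the matrices and the constant is then a semidefinite program: maximize $c$ subject to $A_i\succeq 0$ and, for each of the $156$ graphs $G_j$, the requirement that the coefficient $\lambda_{G_j}-c$ on the left equal the coefficient produced by the averaged quadratic forms plus a nonnegative slack $c_{G_j}$. Assembling the constraint data amounts to expanding each product $\underline{v}_i^{\,T}A_i\,\underline{v}_i$ into $\sigma_i$-flags of order six and applying $\left\llbracket\cdot\right\rrbracket_{\sigma_i}$, i.e.\ precomputing the structure constants $p(F,F';\cdot)$ and the averaging densities $q_{\sigma_i}$; these are finite and purely combinatorial, and are best handled by the same computer program already used to enumerate $\mathcal{G}_6$. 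Solving the resulting SDP numerically produces candidate matrices and a value of $c$ slightly exceeding $\tfrac{1}{34.7858}$.

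The main obstacle is turning the floating-point SDP output into a rigorous certificate. A numerical solution satisfies $A_i\succeq 0$ and the $156$ linear constraints only approximately, so I would round each $A_i$ to a nearby rational matrix that is \emph{provably} positive semidefinite — for instance by perturbing its diagonal and exhibiting a rational Cholesky factorization, or by bounding its least eigenvalue — and re-absorb the resulting errors into the slacks $c_G$. One then verifies in exact arithmetic that every $c_G$ stays nonnegative and that the identity holds for the (very slightly reduced) constant $c=\tfrac{1}{34.7858}$. This exact rounding-and-verification step, rather than the conceptual setup, is where the genuine work lies, and it is what forces the certified constant a little below the true SDP optimum while still comfortably beating $\tfrac{1}{35}$.
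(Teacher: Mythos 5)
Your proposal is correct and follows essentially the same route as the paper: a level-six ($\mathcal{G}_6$) flag-algebra certificate with the six order-$4$ types $\sigma_0,\ldots,\sigma_5$, positive semidefinite matrices block-diagonalized into $Aut(\sigma_i)$-invariant and antiinvariant parts, nonnegative slacks verified coefficient-by-coefficient over the $156$ graphs, and rational rounding of the numerical SDP output. The only cosmetic difference is that the paper restores the color symmetry by reusing the same matrices on the complemented flag vectors $\overline{\underline{g_i}^{\pm}}$ (thereby covering the remaining order-$4$ types) rather than working with full $16\times 16$ matrices per type, but this is a reparametrization within the same method, not a different argument.
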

\begin{proof}
At first we define some vectors $g_i^*$ and matrices $A^{i*}$ (see Appendix \ref{app:vecmat}). With the help of these, the proof is given by the following inequality.\footnotesize
\begin{eqnarray}
\parbox[c]{1cm}{
\begin{tikzpicture}
\filldraw[fill=black] (1,1) circle (0.07cm);
\filldraw[fill=black] (0,1) circle (0.07cm);
\filldraw[fill=black] (1,0) circle (0.07cm);
\filldraw[fill=black] (0,0) circle (0.07cm);
\draw (0,0)--(0,1)--(1,0)--(1,1)--(0,1);
\draw (1,1)--(0,0)--(1,0);
\end{tikzpicture}}
\;+\;
\parbox[c]{1cm}{
\begin{tikzpicture}
\filldraw[fill=black] (1,1) circle (0.07cm);
\filldraw[fill=black] (0,1) circle (0.07cm);
\filldraw[fill=black] (1,0) circle (0.07cm);
\filldraw[fill=black] (0,0) circle (0.07cm);\end{tikzpicture}}
\;-\frac{1}{34.7858} & \geq &
\sum\limits_{i=0}^5\left\llbracket\left(\underline{g_i}^+\right)^TA^{i+}\underline{g_i}^+\right\rrbracket_{\sigma_i}+\sum\limits_{i\in\left\{1,2,3,5\right\}}\left\llbracket\left(\underline{g_i}^-\right)^TA^{i-}\underline{g_i}^-\right\rrbracket_{\sigma_i} \nonumber \\
 & & + \sum\limits_{i=0}^4\left\llbracket\left(\overline{\underline{g_i}^+}\right)^TA^{i+}\overline{\underline{g_i}^+}\right\rrbracket_{\sigma_i}+\sum\limits_{i=1}^3\left\llbracket\left(\overline{\underline{g_i}^-}\right)^TA^{i-}\overline{\underline{g_i}^-}\right\rrbracket_{\sigma_i} \label{eq:main} \\
 & \geq & 0 \nonumber
\end{eqnarray}
\normalsize
In Appendix \ref{app:calctable} one can find a table which helps to verify inequality \eqref{eq:main}.

\end{proof}
\subsection{Some remarks}
Most parts of the proof were done by a computer. At first we decided to work in $\mathcal{G}_6$. Then we took types of order $4$. Thus, if we take products of two $\sigma$-flags on $5$ vertices, where the types have order $4$, then our calculus works in $\mathcal{G}_6$. The vectors $g_i^+$ and $g_i^-$ are chosen in such a way that we have a generating system of the corresponding invariant or antiinvariant part of $\mathcal{A}^{\sigma_i}$. After that we used a computer program to calculate the equations which the semidefinite matrices $A_i^*$ have to fulfill such that we can prove theorem \ref{th:main}. Finally, the determination of the matrices was simply done by a sufficiently close rational approximation to the outcome of a numerical semidefinite-program-solver. The decision to ignore some antiinvariant parts and to work with the same matrices for the complementary types belong to computer experiments. \newline
The computational effort was low enough to try the proof in our setting (in $\mathcal{G}_6$ with types of order 4) in the most general way with 11 types and thus with $22$ matrices. But even then, we could not improve our lower bound of $m_4$ essentially (It was not possible to show $\frac{1}{34.7857}$).

\pagebreak
\begin{appendix}
\section{Appendix: The 156 Graphs}\label{app:m6}
The following table defines the $156$ graphs of $\mathcal{G}_6$.
\tiny
% [inline block 0: 1 envs, 27004 chars -> data_tex | \begin{longtable}{c|l}  & $E(G_i)$ \\...]

\normalsize
\section{Appendix: Definition of the vectors and matrices}\label{app:vecmat}
Here we define the vectors and matrices which are needed in the proof of Theorem \ref{th:main}. \newline
At first, we define the vectors which belong to the invariant parts of the corresponding types.
\begin{eqnarray*}
\underline{g_0}^+ & := & \left(f_\emptyset^{\sigma_0}, f_{\left\{1\right\}}^{\sigma_0}, f_{\left\{1,2\right\}}^{\sigma_0}, f_{\left\{1,2,3\right\}}^{\sigma_0}, f_{\left[4\right]}^{\sigma_0}\right) \\
\underline{g_1}^+ & := & \left(f_\emptyset^{\sigma_1}, f_{\left\{1\right\}}^{\sigma_1}, f_{\left\{3\right\}}^{\sigma_1}, f_{\left\{1,2\right\}}^{\sigma_1}, f_{\left\{1,3\right\}}^{\sigma_1}, f_{\left\{3,4\right\}}^{\sigma_1}, f_{\left\{1,2,3\right\}}^{\sigma_1}, f_{\left\{1,3,4\right\}}^{\sigma_1}, f_{\left[4\right]}^{\sigma_1}\right) \\
\underline{g_2}^+ & := & \left(f_\emptyset^{\sigma_2}, f_{\left\{1\right\}}^{\sigma_2}, f_{\left\{1,2\right\}}^{\sigma_2}, f_{\left\{1,3\right\}}^{\sigma_2}, f_{\left\{1,2,3\right\}}^{\sigma_2}, f_{\left[4\right]}^{\sigma_2}\right) \\
\underline{g_3}^+ & := & \left(f_\emptyset^{\sigma_3}, f_{\left\{1\right\}}^{\sigma_3}, f_{\left\{2\right\}}^{\sigma_3}, f_{\left\{4\right\}}^{\sigma_3}, f_{\left\{1,4\right\}}^{\sigma_3}, f_{\left\{2,3\right\}}^{\sigma_3}, f_{\left\{1,2\right\}}^{\sigma_3}, f_{\left\{2,4\right\}}^{\sigma_3}, f_{\left\{1,2,3\right\}}^{\sigma_3}, f_{\left\{1,2,4\right\}}^{\sigma_3}, f_{\left\{2,3,4\right\}}^{\sigma_3}, f_{\left[4\right]}^{\sigma_3}\right) \\
\underline{g_4}^+ & := & \left(f_\emptyset^{\sigma_4}, f_{\left\{1\right\}}^{\sigma_4}, f_{\left\{2\right\}}^{\sigma_4}, f_{\left\{1,2\right\}}^{\sigma_4}, f_{\left\{2,3\right\}}^{\sigma_4}, f_{\left\{1,2,3\right\}}^{\sigma_4}, f_{\left\{2,3,4\right\}}^{\sigma_4}, f_{\left[4\right]}^{\sigma_4}\right) \\
\underline{g_5}^+ & := & \left(f_\emptyset^{\sigma_5}, f_{\left\{1\right\}}^{\sigma_5}, f_{\left\{3\right\}}^{\sigma_5}, f_{\left\{1,2\right\}}^{\sigma_5}, f_{\left\{3,4\right\}}^{\sigma_5}, f_{\left\{1,3\right\}}^{\sigma_5}, f_{\left\{1,4\right\}}^{\sigma_5}, f_{\left\{1,2,3\right\}}^{\sigma_5}, f_{\left\{1,3,4\right\}}^{\sigma_5}, f_{\left[4\right]}^{\sigma_5}\right)
\end{eqnarray*}
Now we define the vectors which belong to the anti-invariant parts of the corresponding types.
\begin{eqnarray*}
\underline{g_1}^- & := & \left(F_{\left\{1\right\}}^{\sigma_1}-F_{\left\{2\right\}}^{\sigma_1}, F_{\left\{3\right\}}^{\sigma_1}-F_{\left\{4\right\}}^{\sigma_1}, F_{\left\{1,3\right\}}^{\sigma_1}-F_{\left\{2,3\right\}}^{\sigma_1}, F_{\left\{1,3\right\}}^{\sigma_1}-F_{\left\{1,4\right\}}^{\sigma_1}, F_{\left\{1,3\right\}}^{\sigma_1}-F_{\left\{2,4\right\}}^{\sigma_1},\right. \\
& & \left.F_{\left\{1,3,4\right\}}^{\sigma_1}-F_{\left\{2,3,4\right\}}^{\sigma_1}, F_{\left\{1,2,3\right\}}^{\sigma_1}-F_{\left\{1,2,4\right\}}^{\sigma_1}\right) \\
\underline{g_2}^- & := & \left(F_{\left\{1\right\}}^{\sigma_2}-F_{\left\{2\right\}}^{\sigma_2}, F_{\left\{1,2\right\}}^{\sigma_2}-F_{\left\{3,4\right\}}^{\sigma_2}, F_{\left\{1,3\right\}}^{\sigma_2}-F_{\left\{1,4\right\}}^{\sigma_2}, F_{\left\{1,2,3\right\}}^{\sigma_2}-F_{\left\{1,2,4\right\}}^{\sigma_2},\right. \\
& & \left.F_{\left\{1,2,3\right\}}^{\sigma_2}-F_{\left\{1,3,4\right\}}^{\sigma_2}, F_{\left\{1,3\right\}}^{\sigma_2}-F_{\left\{2,4\right\}}^{\sigma_2}\right) \\
\underline{g_3}^- & := & \left(F_{\left\{2\right\}}^{\sigma_3}-F_{\left\{3\right\}}^{\sigma_3}, F_{\left\{1,2\right\}}^{\sigma_3}-F_{\left\{1,3\right\}}^{\sigma_3}, F_{\left\{2,4\right\}}^{\sigma_3}-F_{\left\{3,4\right\}}^{\sigma_3}, F_{\left\{1,2,4\right\}}^{\sigma_3}-F_{\left\{1,3,4\right\}}^{\sigma_3},\right. \\
\underline{g_5}^- & := & \left(F_{\left\{1\right\}}^{\sigma_5}-F_{\left\{2\right\}}^{\sigma_5}, F_{\left\{3\right\}}^{\sigma_5}-F_{\left\{4\right\}}^{\sigma_5}, F_{\left\{1,3\right\}}^{\sigma_5}-F_{\left\{2,4\right\}}^{\sigma_5}, F_{\left\{1,4\right\}}^{\sigma_5}-F_{\left\{2,3\right\}}^{\sigma_5},\right. \\
& & \left.F_{\left\{1,2,3\right\}}^{\sigma_5}-F_{\left\{1,2,4\right\}}^{\sigma_5}, F_{\left\{1,3,4\right\}}^{\sigma_5}-F_{\left\{2,3,4\right\}}^{\sigma_5}\right)
\end{eqnarray*}
Furthermore, the definition of the required symmetric positive definite matrices.
\begin{equation*}
A^{0+}:=\frac{1}{10^{10}}
\left(
\begin{array}{*{5}{r}}
    16862005  &   5938009 &   10666588 &    3932432 &  -16602234 \\
     5938009  &   2092561 &    3755186 &    1382600 &   -5846206 \\
    10666588  &   3755186 &    6748387 &    2489266 &  -10502546 \\
     3932432  &   1382600 &    2489266 &     920571 &   -3872384 \\
   -16602234  &  -5846206 &  -10502546 &   -3872384 &   16346570
\end{array}
\right)
\end{equation*}
\footnotesize
\begin{eqnarray*}
A^{1+}& := & \frac{1}{10^{10}}
\left(
\begin{array}{*{6}{r}}
   4320915081 &  5114912033 & 0838876074 & 20166387   &-720812722  &\\
   5114912033 &  7305691770 & 1132075909 &-213140155  &-239779535  &\\
    838876074 &  1132075909 & 4258272084 &-4128294302 &-2560148834 &\\
     20166387 & -0213140155 &-4128294302 & 4176995622 & 2383723976 &\\
  - 720812722 & -0239779535 &-2560148834 & 2383723976 & 1953845159 & \ldots \\
  -3536992459 & -3834927363 & 1822789967 &-2569691611 &-736358264  &\\
    832941643 &  0635345271 & 0359072249 &-166507792  &-472411192  &\\
  -3812420339 & -5515309466 & 3014089726 &-3717449251 &-2224367556 &\\
  -1928295121 & -2126188382 &-2780770750 & 2399954498 & 1886952203 &
\end{array}
\right. \\
& & \left.
\begin{array}{*{5}{r}}
 &  -3536992459 &   832941643& -3812420339 &-1928295121\\
 &  -3834927363 &   635345271& -5515309466 &-2126188382\\
 &   1822789967 &   359072249&  3014089726 &-2780770750\\
 &  -2569691611 &  -166507792& -3717449251 & 2399954498\\
\ldots &  -736358264 &  -472411192& -2224367556 & 1886952203 \\
 &   4494029803 &  -646218304&  5173661199 & 160104823 \\
 &  -646218304 &   292587053& -217236155 &-567828688   \\
 &   5173661199 &  -217236155&  7837719380 &-728044396 \\
 &   160104823 &  -567828688& -728044396 & 2327049595
\end{array}\right)
\end{eqnarray*}
\begin{equation*}
A^{2+}:=\frac{1}{10^{10}}
\left(
\begin{array}{*{6}{r}}
   159078056 &  307070840 & -132946711 &   37583858 & -260766405 &  -15356594\\
   307070840 &  592875018 & -256638654 &   72586497 & -503453484 &  -29630203\\
  -132946711 & -256638654 &  111109118 &  -31412932 &  217937464 &   12832453\\
    37583858 &   72586497 &  -31412932 &    8890997 &  -61635435 &   -3624388\\
  -260766405 & -503453484 &  217937464 &  -61635435 &  427522394 &   25164249\\
   -15356594 &  -29630203 &   12832453 &   -3624388 &   25164249 &    1484928
\end{array}
\right)
\end{equation*}
\tiny
\begin{eqnarray*}
A^{3+}& := & \frac{1}{10^{10}}
\left(
\begin{array}{*{7}{r}}
   9911130076  &  9648505978 &  16664349190 &  9328739972 &  936324078 &  2061273472\\
   9648505978  &  24617473150 &  7220730652 &  10720412510 &  5131566980 & -23771485619\\
   16664349190 &  7220730652 &  46274936407 &  19324364441 &  1907848702 &  24288819932\\
   9328739972  &  10720412510 &  19324364441 &  15648801231 &  6256538527 & -1977377203\\
   936324078   &  5131566980 &  1907848702 &  6256538527 &  8925400310 & -12787276407\\
   2061273472  & -23771485619 &  24288819932 & -1977377203 & -12787276407 &  53782517217 & \ldots \\
  -18294412578 & -17850876485 & -45321487336 & -33168309077 & -15412214883 & -868355423\\
  -601754032   & -4390712397 &  8672825757 &  8687065569 & -943394603 &  13124014881\\
  -14953562741 & -9564752218 & -39851589035 & -20084517669 & -1900335064 & -18100206208\\
   7093611190  &  11908193393 & -121391261 &  6898543540 &  9339492106 & -20811949215\\
   2572438455  &  332196382 &  13726291191 &  369165438 & -4547421833 &  13307983466\\
  -16848495399 & -15745854235 & -39096523027 & -16479279662 & -1971257917 & -10251510005
\end{array}
\right. \\
& & \left.
\begin{array}{*{7}{r}}
 & -18294412578 & -601754032 & -14953562741 &  7093611190 &  2572438455 & -16848495399\\
 & -17850876485 & -4390712397 & -9564752218 &  11908193393 &  332196382 & -15745854235\\
 & -45321487336 &  8672825757 & -39851589035 & -121391261 &  13726291191 & -39096523027\\
 & -33168309077 &  8687065569 & -20084517669 &  6898543540 &  369165438 & -16479279662\\
 & -15412214883 & -943394603 & -1900335064 &  9339492106 & -4547421833 & -1971257917\\
\ldots & -868355423 &  13124014881 & -18100206208 & -20811949215 &  13307983466 & -10251510005\\
 &  74259569245 & -18500163943 &  44501159648 & -12830955695 & -2623240855 &  36726257026\\
 & -18500163943 &  26851070841 & -14712006368 & -14887887504 &  2474708345 &  145079825\\
 &  44501159648 & -14712006368 &  37588543382 &  2251374141 & -11236080891 &  33285652211\\
 & -12830955695 & -14887887504 &  2251374141 &  27039750365 & -11338452479 & -3895727030\\
 & -2623240855 &  2474708345 & -11236080891 & -11338452479 &  11811598967 & -13441522231\\
 &  36726257026 &  145079825 &  33285652211 & -3895727030 & -13441522231 &  38903952142
\end{array}\right)
\end{eqnarray*}
\normalsize
\begin{eqnarray*}
A^{4+}& := & \frac{1}{10^{10}}
\left(
\begin{array}{*{7}{r}}
   46800933 &  104186657 &  371956795 & -340593608&\\
   104186657 &  1038914521 &  1914699035 & -785600961&\\
   371956795 &  1914699035 &  4419934269 & -2744302275&\\
  -340593608 & -785600961 & -2744302275 &  2480213000&\ldots\\
  -137441315 & -720439710 & -1650449523 &  1014306440&\\
   85811179 & -1047011245 & -985196472 & -582494082&\\
  -55302691 & -230276564 & -583791454 &  406095485&\\
  -103112540 & -979995373 & -1830092885 &  775885584&
\end{array}
\right. \\
& & \left.
\begin{array}{*{7}{r}}
 & -137441315 &  85811179 & -55302691 & -103112540\\
 & -720439710 & -1047011245 & -230276564 & -979995373\\
 & -1650449523 & -985196472 & -583791454 & -1830092885\\
 &  1014306440 & -582494082 &  406095485 &  775885584\\
 &  616737186 &  383571355 &  217523075 &  688175720\\
 &  383571355 &  2057167346 &  62880166 &  962384782\\
 &  217523075 &  62880166 &  79623311 &  221456205\\
 &  688175720 &  962384782 &  221456205 &  925107090
\end{array}\right)
\end{eqnarray*}
\footnotesize
\begin{eqnarray*}
A^{5+}& := & \frac{1}{10^{10}}
\left(
\begin{array}{*{6}{r}}
     357787678 &  2667561490 & -3158511012 & -1321494554 & -719607624&\\
    2667561490 &  23782098207 & -27470938499 & -4462773752 & -7685990746&\\
   -3158511012 & -27470938499 &  31843561651 &  6263699767 &  8686163322&\\
   -1321494554 & -4462773752 &  6263699767 &  12586569384 & -595804570&\\
   -0719607624 & -7685990746 &  8686163322 & -595804570 &  2853679962& \ldots \\
   -0749034428 & -7099675831 &  8132219053 &  602235047 &  2416214541&\\
   -0746618224 & -7127076574 &  8165113979 &  602235047 &  2416214542&\\
    2667573049 &  23752965113 & -27444445656 & -4462773755 & -7685990744&\\
   -3159785490 & -27444445649 &  31809130251 &  6263699777 &  8686163317&\\
    356118081 &  2667573049 & -3159785490 & -1321494554 & -719607624&
\end{array}
\right. \\
& & \left.
\begin{array}{*{6}{r}}
 &  -749034428 & -746618224&   2667573049&  -3159785490&   356118081\\
 &  -7099675831 & -7127076574&   23752965113&  -27444445649&   2667573049\\
 &   8132219053 &  8165113979&  -27444445656&   31809130251&  -3159785490\\
 &   602235047 &  602235047&  -4462773755&   6263699777&  -1321494554\\
 \ldots &   2416214541 &  2416214542&  -7685990744&   8686163317&  -719607624\\
 &   2201093330 &  2167739656&  -7127076574&   8165113978&  -746618224\\
 &   2167739656 &  2201093330&  -7099675831&   8132219049&  -749034428\\
 &  -7127076574 & -7099675831&   23782098207&  -27470938496&   2667561490\\
 &   8165113978 &  8132219049&  -27470938496&   31843561644&  -3158511012\\
 &  -746618224 & -749034428&   2667561490&  -3158511012&   357787678 \end{array}\right)
\end{eqnarray*}
\normalsize
\begin{eqnarray*}
A^{1-}& := & \frac{1}{10^{10}}
\left(
\begin{array}{*{5}{r}}
   14307490741 &  2586781945 &  9478561500 & -8503040278&\\
   2586781945 &  24279644543 & -19491862247 &  21589760706&\\
   9478561500 & -19491862247 &  28964735982 & -22441455520&\\
  -8503040278 &  21589760706 & -22441455520 &  31332129348& \ldots \\
   13318186182 &  23618557200 & -13855425214 &  8889128831&\\
   8372793763 &  1525766074 &  5548458322 & -4968118814&\\
   1755357971 &  16315840647 & -13131926293 &  14676776308&
\end{array}
\right. \\
& & \left.
\begin{array}{*{4}{r}}
 &  13318186182 &  8372793763 &  1755357971\\
 &  23618557200 &  1525766074 &  16315840647\\
 & -13855425214 &  5548458322 & -13131926293\\
\ldots &  8889128831 & -4968118814 &  14676776308\\
 &  35089388707 &  7808231324 &  15932298681\\
 &  7808231324 &  4915982899 &  1034856802\\
 &  15932298681 &  1034856802 &  13507587054
\end{array}\right)
\end{eqnarray*}
\tiny
\begin{equation*}
A^{2-}:=\frac{1}{10^{10}}
\left(
\begin{array}{*{6}{r}}
   39301474130 & -12586488688 & -26325499489 &  4835146333 & -3200582867 &  39402624169\\
  -12586488688 &  4648340313 &  8427369658 & -1582302739 &  1181917652 & -12615039089\\
  -26325499489 &  8427369658 &  17666210517 & -3224681746 &  2142974291 & -26409599889\\
   4835146333 & -1582302739 & -3224681746 &  2763326005 & -402442928 &  4847377916\\
  -3200582867 &  1181917652 &  2142974291 & -402442928 &  300536605 & -3207844812\\
   39402624169 & -12615039089 & -26409599889 &  4847377916 & -3207844812 &  39528119078
\end{array}
\right)
\end{equation*}
\normalsize
\begin{equation*}
A^{3-}:=\frac{1}{10^{10}}
\left(
\begin{array}{*{4}{r}}
   73874538950 &  72373625861 &  10093264001 &  24183549820\\
   72373625861 &  93054314984 &  11635620671 &  29507918873\\
   10093264001 &  11635620671 &  1546846146 &  4342975799\\
   24183549820 &  29507918873 &  4342975799 &  59399032160
\end{array}
\right)
\end{equation*}
\tiny
\begin{equation*}
A^{5-}:=\frac{1}{10^{10}}
\left(
\begin{array}{*{6}{r}}
    9930361952 & - 8840932504 &   3083822328 &  19041777196 &            0 &  8749601119  \\
  - 8840932504 &  52724642820 &  25782427487 & -3432529692 &  8749601126 &  7  \\
    3083822328 &  25782427487 &  48331024300 &            0 &  19041777196 &  3432529692  \\
   19041777196 & - 3432529692 &            0 &  48331024300 & -3083822328 &  25782427477  \\
             0 &   8749601126 &  19041777196 & -3083822328 &  9930361952 &  8840932508  \\
    8749601119 &            7 &   3432529692 &  25782427477 &  8840932508 &  52724642813
\end{array}
\right)
\end{equation*}
\section{Appendix: Table for the calculus}\label{app:calctable}
\normalsize
For an easier reading we write
\begin{eqnarray*}
L & := & \parbox[c]{1cm}{
\begin{tikzpicture}
\filldraw[fill=black] (1,1) circle (0.07cm);
\filldraw[fill=black] (0,1) circle (0.07cm);
\filldraw[fill=black] (1,0) circle (0.07cm);
\filldraw[fill=black] (0,0) circle (0.07cm);
\draw (0,0)--(0,1)--(1,0)--(1,1)--(0,1);
\draw (1,1)--(0,0)--(1,0);
\end{tikzpicture}}
\;+\;
\parbox[c]{1cm}{
\begin{tikzpicture}
\filldraw[fill=black] (1,1) circle (0.07cm);
\filldraw[fill=black] (0,1) circle (0.07cm);
\filldraw[fill=black] (1,0) circle (0.07cm);
\filldraw[fill=black] (0,0) circle (0.07cm);\end{tikzpicture}}
\;-\frac{1}{34.7858}, \\
R & := & \sum\limits_{i=0}^5\left\llbracket\left(\underline{g_i}^+\right)^TA^{i+}\underline{g_i}^+\right\rrbracket_{\sigma_i}+\sum\limits_{i\in\left\{1,2,3,5\right\}}\left\llbracket\left(\underline{g_i}^-\right)^TA^{i-}\underline{g_i}^-\right\rrbracket_{\sigma_i} \\
 & & + \sum\limits_{i=0}^4\left\llbracket\left(\overline{\underline{g_i}^+}\right)^TA^{i+}\overline{\underline{g_i}^+}\right\rrbracket_{\sigma_i}+\sum\limits_{i=1}^3\left\llbracket\left(\overline{\underline{g_i}^-}\right)^TA^{i-}\overline{\underline{g_i}^-}\right\rrbracket_{\sigma_i}.
\end{eqnarray*}
In the following table the $(i,j)$'th element of the matrix $A^{k*}$ is denoted by $a_{i,j}^{k*}$. Because of the symmetry, we don't have to distinguish between $a_{i,j}^{k*}$ and $a_{j,i}^{k*}$. Each row in the following table belongs to the coefficient of $G_i$ in equation \eqref{eq:main}.
\tiny
% [inline block 1: 1 envs, 33582 chars -> data_tex | \begin{longtable}{c|c|c|c|p{8cm}} $i$ & $6!L$ & $6!R$ & $\left(6!L-6!R\right)10^3$ & $6!R$ (in variables) \\ \hline \hli...]

%\caption{Table for the calculus of the proof of theorem \ref{th:main}}\label{tab:calc}

\end{appendix}
\end{document}